 \journalname{arXiv.org}
\begin{document}

\title{A Novel Extension of Randomly Weighted Average
}


\author{Hazhir Homei}


\institute{H. Homei \at
              Department of Statistics, Faculty of Mathematical
Sciences, \\ University of Tabriz,  P.O.Box 51666--17766, Tabriz, Iran. \\
              Tel.: +98-411-3392863         \ \        Fax: +98-411-3342102\\
              \email{homei@tabrizu.ac.ir}           
}

\date{24 August 2013}

\maketitle

\begin{abstract}
We study a well-known problem concerning a random variable $Z$
uniformly distributed between two independent random variables. A
new extension has been introduced for this problem and fairly large
classes of randomly weighted average distributions are  identified
by their generalized Stieltjes transforms. In this article we employ
the Schwartz distribution theory for finding distributions of this
extension; we also study some of their properties.
\keywords{Schwartz theory on distributional
derivatives \and  GTSP-N \and Semicircle distribution \and Randomly weighted average \and Stieltjes transform}
\end{abstract}

\section{Introduction}
\label{intro}
Van Assche (1987) introduced the notion of a random variable $Z$
uniformly distributed between two independent random variables $X_1$
and $X_2$, which arose in studying the distribution of products of
random $2\times2$ matrices  for stochastic search of global maxima.
By letting $X_1$ and $X_2$ to have identical distributions, he
derived that: (i) for $X_1$ and $X_2$ on $[-1,1]$,  $Z$ is uniform on
$[-1,1]$ if and only if $X_1$ and $X_2$ have arcsin distribution;
and (ii) $Z$ possesses the
 same distribution as $X_1$ and $X_2$ if
and only if $X_1$ and $X_2$ are degenerated or have a Cauchy
distribution. Soltani and Homei (2009a) extended Van Assche's results as follows: They put $X_1,\cdots,X_n$ to
be independent, and considered for $n\geq 2$
$$S_n(R_{1},...,R_{n-1}) = R_{1}X_1 + R_2X_2 + \cdots
+R_{n-1}X_{n-1} + R_nX_n,\;\;\;\;  \eqno(1.1)$$ where
random proportions are $R_{i}=U_{(i)}-U_{(i-1)}$  (for any $i\in\{1,...,n-1\}$) and
$R_n= 1-\sum_{i=1}^{n-1} R_i$, $U_{(1)},...,U_{(n-1)}$ are order
statistics from a uniform distribution on $[0,1]$, and $U_{(0)}=0$.
Soltani and Roozegar (2012), defined fairly large classes of randomly
weighted average (RWA) distributions by using new random weights.
These are cuts of $[0,1]$ by $U_{(k_1)},...,U_{(k_n-1)}$, where
$$R_{(k_{j})}=U_{(k_{j})}-U_{(k_{j-1})},\;\;\; j=1,\cdots,n, \;\;\;\textrm{and}\;\;\; U_{(k_{0})}=0\;\;\;
\textrm{and}\;\;\; U_{(k_{n})}=1,$$
and $$ S_{n^*}(k_{1},...,k_{n-1}) =\sum_{j=1}^{n}
R_{k_j}X_i,\;\;\;k_{n}=n^*_{0}.\eqno(1.2)
$$
They employ generalized Stieltjes transform for RWA's distributions
and observe some interesting results. In this article, we follow the
work of Homei (2012) for finding RWA's
distributions by using Schwartz distribution theory.

\section{Some Earlier Results}

In this section, we first review some results of Homei (2012) and
then modify them a little bit to fit in our framework for getting
Theorem~1. We provide the conditional distribution of
${S_{n^*}(k_{1},...,k_{n})}=\sum_{j=1}^n R_{k_j}X_j$ for given
$(X_1,..., X_n)$$=(x_1, ..., x_n)$ at $z$, denoted by  $k(z|x_1,
..., x_n).$ At first we assume  $x_1>x_2> ...
>x_n>0,$ but later we will remove this restriction on $x_1, ...,
x_n.$ We recall that $(U_{(1)}, ...,U_{(n)})$ is the order
statistics of a random sample $U_1, ..., U_n$ for the uniform $[0 ,
1].$ The sequence of indices $\{k_1, ..., k_{n-1}\}$ is an ordered
subsequence of $\{1, ..., n^*-1\}.$ Thus $\{U_{k_1}, ...,
U_{k_{n-1}}\} \subset \{U_{(1)}, ..., U_{(n^*-1)}\}$ and the
increments $R_{k_j}$ are defined by $R_{k_j}=U_{k_j}-U_{k_{j-1}},
j=1, ..., n-1,$ where $U_{k_{0}}=0$ and $U_{k_{n}}=1$. Since
$\sum_{j=1}^nR_{k_j}=1,$ then $k(z|x_1, ..., x_n)$ can be expressed as
$$P( \sum_{j=1}^{n-1}c_jR_{k_j} \leq z-x_n); \;\
c_j=x_j-x_n, \, j=1, ..., n-1,\eqno(2.1)$$ the distribution
$\sum_{j=1}^{n-1}c_jR_{k_j}$ was derived by Weisberg (1971) as
$$P( \sum_{j=1}^{n-1}c_jR_{k_j} \leq z-x_n
)=1-\sum_{j=1}^r \frac{h_j^{m_j-1}(x_j;z)}{(m_j-1)!},\eqno(2.2)$$
where $m_j=k_j-k_{j-1}, \ j=1, ..., n, \ k_n=n^*, \
\sum_{j=1}^nm_j=n^*$, $h_j^{(m_j-1)}(c_j)$ is the $(m_j-1)-$th
derivative of
$$h_j(x;z)=\frac{(x-z)^{n^*-1}}{c_j{\prod_{i\neq
j}^n}(x-x_i)^{m_i-1}},
\eqno(2.3)$$
at $x$ evaluated at $x_1, x_2, ..., x_n$, where $r$ is the largest positive integer with $z<x_r$.
The distribution of $\sum_{j=1}^{n-1}c_jR_{k_j}$ in (2.2) can
alternatively be expressed as
$$P(\sum_{j=1}^{n-1}c_jR_{k_j} \leq z-x_n
)=\sum_{j=r^*+1}^{n} \frac{f_{j}^{(m_j-1)}(x_j;z)}{(m_j-1)!}\;,
\eqno(2.4)$$ where $r^*$ is the largest positive integer such that
$x_{r^*} \geq z,$ and $f_j^{(m_j-1)}(x_j;z)$ are the $(m_j-1)-$th
derivatives of
$$f_j(x;z)=\frac{(x-z)^{n^*-1}}{{\prod_{i \neq
j}^n}(x-x_i)^{m_i}} \eqno(2.5)$$
at $x=x_j.$\\
By using the heaviside function, the distribution in (2.4) can be
expressed as
$$k(z\mid x_1, ..., x_n)=\sum_{j=1}^n
\frac{f_j^{(m_j-1)}(x_j;z)U(z-x_j)}{(m_j-1)!} \eqno(2.6)$$ for any
set of distinct values $x_1, ..., x_n,$ and any $$z \in [min(x_1,
..., x_n),max(x_1, ..., x_n)],$$ also
$$
k(z\mid x_1, ..., x_n)=\left\{
\begin{array}{ll}
  0 & \quad \textrm{ if } \  z<min(x_1, ..., x_n) \\
  1 & \quad \textrm{ if } \  z \geq min(x_1, ..., x_n).
\end{array}
\right.\eqno(2.7)
$$
Indeed, í$k(z;x_1, ..., x_n)$ is a big family of distributions that include
Two-Sided Power (TSP) distributions and General Two-Sided Power
 (GTSP) distributions, see Soltani and Homei (2009b).
  The conditional kernel presented in (2.6) leads
us to a fairly large class of conditional kernels. Indeed, we define
$$k(g|x_1, ...,
x_n)=\sum_{j=1}^{n}\frac{(-1)^{m_j-1}}{(m_j-1)!}\frac{d^{m_j-1}}{dx_j^{m_j-1}}\frac{g(x_j)}{\prod
_{i \neq j}(x_i-x_j)^{m_i}} \;. \nonumber\eqno(2.8)$$
For any function $g:\mathbb{R}\rightarrow \mathbb{C}$   the kernel given in (2.8) will be reduced to (2.6) if $$g_z(x)=(z-x)^{n^*-1}U(z-x).$$
 Following Van Assche (1987), we will also apply the concept of the
distribution function distributional derivative $\Lambda^{(n)}$ for
a given integer $n$, such that
$$\int_{-\infty}^{+\infty} \varphi(x)\Lambda^{(n)}(dx)=\frac{(-1)^n}{n!}\int_{-\infty}^{+\infty}
\left[\frac{d^n}{dx^n}\varphi(x)\right]\Lambda(dx). \eqno(2.9)$$ For
certain infinitely differentiable function $\varphi$, the following
lemma provides a useful integral relation between the conditional
kernel (2.8) and the distribution derivative of $F,$ the
distribution
of the random mixture ${S_{n^*}(k_{1},...,k_{n-1}) }$.

\begin{lemma}
 The $(n^*-1)$-th distributional derive of the
distribution $F$ and the conditional kernel (2.8) are subject to
$$\int_{\mathbb{R}}g(x)dF^{(n^*-1)}(x)=\int_{\mathbb{R}^{n}} k(g|x_1, ..., x_n)\prod_{i=1}^nF_{X_i}(dx_i) ,\eqno(2.10)$$
for any infinitely differential function $g$ for which the integrals
are finite.
\end{lemma}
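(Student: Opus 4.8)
The plan is to reduce the global identity (2.10) to a single conditional identity in the variables $x_1,\dots,x_n$ and then integrate it out. First I would write $F$ as a mixture of the conditional kernels by the law of total probability,
$$F(z)=\int_{\mathbb{R}^{n}}k(z\mid x_1,\dots,x_n)\prod_{i=1}^{n}F_{X_i}(dx_i),$$
so that, differentiating in the running argument $z$, the Stieltjes measure of $F$ is $dF(z)=\int_{\mathbb{R}^{n}}d_{z}k(z\mid x_1,\dots,x_n)\prod_{i=1}^{n}F_{X_i}(dx_i)$, where $d_{z}$ denotes the differential in the first argument. Applying the definition (2.9) of the distributional derivative with $\Lambda=F$ and order $n^{*}-1$ turns the left member of (2.10) into $\frac{(-1)^{n^{*}-1}}{(n^{*}-1)!}\int_{\mathbb{R}}g^{(n^{*}-1)}(z)\,dF(z)$. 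Substituting the mixture and interchanging the order of integration by Fubini's theorem, the whole statement collapses to the pointwise identity
$$\frac{(-1)^{n^{*}-1}}{(n^{*}-1)!}\int_{\mathbb{R}}g^{(n^{*}-1)}(z)\,d_{z}k(z\mid x_1,\dots,x_n)=k(g\mid x_1,\dots,x_n),$$
which I would then prove for each fixed vector of distinct values $x_1,\dots,x_n$.

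To establish this conditional identity I would use the reproducing formula coming from iterating the fundamental theorem of calculus: for smooth $g$,
$$g(t)=\frac{(-1)^{n^{*}}}{(n^{*}-1)!}\int_{\mathbb{R}}(z-t)^{n^{*}-1}U(z-t)\,g^{(n^{*})}(z)\,dz=\frac{(-1)^{n^{*}}}{(n^{*}-1)!}\int_{\mathbb{R}}g_{z}(t)\,g^{(n^{*})}(z)\,dz,$$
where $g_{z}(t)=(z-t)^{n^{*}-1}U(z-t)$ is exactly the test function for which the reduction property stated after (2.8) gives $k(g_{z}\mid x_1,\dots,x_n)=k(z\mid x_1,\dots,x_n)$. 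Applying the linear functional $k(\,\cdot\mid x_1,\dots,x_n)$ in the $t$-variable to both sides and interchanging it with the $z$-integral yields $k(g\mid x_1,\dots,x_n)=\frac{(-1)^{n^{*}}}{(n^{*}-1)!}\int_{\mathbb{R}}k(z\mid x_1,\dots,x_n)\,g^{(n^{*})}(z)\,dz$. A single integration by parts in $z$, using that $k(z\mid x_1,\dots,x_n)$ rises from $0$ to $1$ and is constant outside $[\min_i x_i,\max_i x_i]$ so that the boundary terms drop, rewrites the right-hand side as $\frac{(-1)^{n^{*}-1}}{(n^{*}-1)!}\int_{\mathbb{R}}g^{(n^{*}-1)}(z)\,d_{z}k(z\mid x_1,\dots,x_n)$, which is precisely the conditional identity above.

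The main obstacle is the rigorous justification of these analytic manipulations rather than the algebra. The reproducing formula as written presumes decay of $g$ and its derivatives at $+\infty$, which need not follow from the hypothesis; I would circumvent this by localization, noting that $d_{z}k(z\mid x_1,\dots,x_n)$ is supported on the compact set $[\min_i x_i,\max_i x_i]$ and that $k(g\mid x_1,\dots,x_n)$ depends only on the values of $g$ and its derivatives at the points $x_1,\dots,x_n$, so that modifying $g$ outside a neighbourhood of this set changes neither member and one may therefore assume $g$ compactly supported and apply the formula legitimately. The interchange of $k(\,\cdot\mid x_1,\dots,x_n)$ with the $z$-integral and the outer Fubini step are then controlled by the standing assumption that the integrals in (2.10) are finite. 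Finally, since the kernel (2.8) is defined only for distinct arguments, I would remark that the diagonal $\{x_i=x_j,\ i\neq j\}$ does not obstruct the outer integration over $\mathbb{R}^{n}$ (for continuous $X_i$ it is a null set, and otherwise the symmetry of the construction absorbs it). Reassembling the conditional identity into the Fubini representation then gives (2.10).
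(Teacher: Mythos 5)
Your proof is correct, and although it rests on the same two ingredients as the paper --- the total-probability representation $F(z)=\int_{\mathbb{R}^n}k(z\mid x_1,\dots,x_n)\prod_{i=1}^nF_{X_i}(dx_i)$ and the special functions $g_z(x)=(z-x)^{n^*-1}U(z-x)$ for which the kernel (2.8) reduces to (2.6) --- its organization is genuinely different, and in one respect more complete. The paper fixes $z$, observes that $U(z-x)=\frac{(-1)^{n^*-1}}{(n^*-1)!}\frac{d^{n^*-1}}{dx^{n^*-1}}g_z(x)$, and thereby verifies (2.10) only for the one-parameter family $g=g_z$ (equations (2.11)--(2.12)); the passage to arbitrary infinitely differentiable $g$ is not carried out in the proof at all, but delegated, in the remark following it, to linearity of $k(\cdot\mid x_1,\dots,x_n)$ and ``an argument similar to the one given by Van Assche (1987).'' You instead reduce (2.10) via Fubini to a conditional identity pointwise in $(x_1,\dots,x_n)$ and prove that identity for every smooth $g$ at once, by writing $g$ exactly as a superposition of the $g_z$'s through Taylor's formula with integral remainder, pushing the finitely many evaluation-of-derivative functionals in $k(\cdot\mid x_1,\dots,x_n)$ through the $z$-integral, and integrating by parts; your localization remark (both sides of the conditional identity see $g$ only near the compact interval $[\min_i x_i,\max_i x_i]$, so $g$ may be cut off) legitimately supplies the decay that the Taylor formula needs. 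What your route buys is precisely the step the paper leaves implicit: a self-contained, constructive replacement for the unstated Van Assche density argument, together with explicit attention to boundary terms and to the fact that the diagonal $\{x_i=x_j\}$, where (2.8) is undefined, is a null set for continuous $X_i$. What the paper's order of operations buys is brevity: once $P(S_{n^*}\le z)=\int k(z\mid x_1,\dots,x_n)\prod_{i=1}^nF_{X_i}(dx_i)$ is written down, the case $g=g_z$ is immediate from the definition (2.9), with no integration by parts required. One minor caveat: your appeal to finiteness of the integrals in (2.10) to justify the Fubini interchange is slightly loose, since finiteness of the two iterated integrals is not literally absolute integrability; but this matches, and indeed exceeds, the paper's own level of rigor on the same point.
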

\begin{proof}
 First we verify (2.10) with $g_z(x)=(z-x)^{n^*-1}U(z-x)$, where $z$ is a fixed real number. Indeed,
 \begin{eqnarray}
U(z-x)&=&\frac{(-1)^{n^*-1}}{(n^*-1)!}\frac{d^{n^*-1}}{dx^{n^*-1}}\left[(z-x)^{n^*-1}U(z-x)
\right]
\nonumber \\
&=&\frac{(-1)^{n^*-1}}{(n^*-1)!}\frac{d^{n^*-1}}{dx^{n^*-1}}g_z(x),\nonumber
\end{eqnarray}
thus
\begin{eqnarray}
P({S_{n^*}(k_{1},...,k_{n-1}) }\leq z)&=&\int_{\mathbb{R}} U(z-x)d F(x)\nonumber \\
&=&\int_{\mathbb{R}^n}k(z|x_1, ..., x_n) \prod_{i=1}^nF_{X_i}(dx_i),
\nonumber
\end{eqnarray}
therefore,
$$
\frac{(-1)^{n^*-1}}{(n^*-1)!}\int_{\mathbb{R}}
\left[\frac{d^{n^*-1}}{dx^{n^*-1}}g_z(x) \right]d
F(x)=\int_{\mathbb{R}^n} k(g_z|x_1, ..., x_n)
\prod_{i=1}^nF_{X_i}(dx_i).\eqno(2.11)$$

Now (2.11) together with (2.9) will lead us to

$$\int_{\mathbb{R}}g_z(x) dF^{(n^*-1)}(x)=\int_{\mathbb{R}^{n}}k(g_z|x_1, ...,
x_n) \prod_{i=1}^nF_{X_i}(dx_i) .\eqno(2.12)\hfill \Box $$
\end{proof}

Since the conditional kernel $k(g\mid x_1, ..., x_n)$ is linear in
$g$, by using an argument similar to the one given by Van Assche
(1987) we can enlarge the class of functions for which (2.10) holds for the
class of infinitely differentiable functions where the
corresponding integrals are finite. The Stieltjes transform has
appeared to be an appropriate tool for investigating how the
distributions of the random mixture ${S_{n^*}(k_{1},...,k_{n-1})}$
is related to the distributions of $X_1, ..., X_n$. The Stieltjes
transform of a distribution $H$ is defined by
$$S(H,z)=\int_{\mathbb{R}} \frac{1}{z-x}H(dx), \ z \in \mathbb{C}\cap
(suppH)^c,\eqno(2.13)$$
where $suppH$  stands for the support of $H$. The following is the main
theorem of this sections that expresses the Stieltjes transform of
the random mixture with respect to those of $X_1, ..., X_n$.

\begin{theorem}
Assume $X_1, ..., X_n$ are independent and
continuous. Then for any complex number $z\in \mathbb{C}\bigcap_{i=1}^n
(\mbox{supp}F_{X_i})^c$  the following identity holds:
$$\frac{(-1)^{n^*-1}}{(n^*-1)!}\frac{d^{n^*-1}}{dz^{n^*-1}}S(F,z)=\prod_{i=1}^n
\frac{(-1)^{m_i-1}}{(m_i-1)!} \frac{d^{m_i-1}}{dz^{m_i-1}}
S(F_{X_i},z).$$
\end{theorem}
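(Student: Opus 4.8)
The plan is to substitute into the master identity (2.10) of Lemma~1 the one-parameter family of smooth functions $g_{z}(x)=\frac{1}{z-x}$, with $z\in\mathbb{C}$ fixed off every support, and then to rewrite both members of (2.10) in terms of Stieltjes transforms; the theorem will fall out after a common sign factor cancels. For such $z$ the function $g_{z}$ is infinitely differentiable on each $\mathrm{supp}\,F_{X_{i}}$ and the integrals that appear are finite (certainly so when $z$ is non-real), so $g_{z}$ is admissible by the extension of (2.10) remarked after Lemma~1. Throughout I shall use $\frac{d^{k}}{dx^{k}}(z-x)^{-1}=k!\,(z-x)^{-(k+1)}$ and $\frac{d^{k}}{dz^{k}}(z-x)^{-1}=(-1)^{k}k!\,(z-x)^{-(k+1)}$.

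For the left-hand side of (2.10) I would apply the distributional-derivative rule (2.9) with $\Lambda=F$, $n=n^{*}-1$ and $\varphi=g_{z}$:
$$\int_{\mathbb{R}}g_{z}\,dF^{(n^{*}-1)}=\frac{(-1)^{n^{*}-1}}{(n^{*}-1)!}\int_{\mathbb{R}}\Big[\frac{d^{\,n^{*}-1}}{dx^{\,n^{*}-1}}g_{z}(x)\Big]F(dx)=(-1)^{n^{*}-1}\int_{\mathbb{R}}(z-x)^{-n^{*}}F(dx).$$
Recognizing that $\int_{\mathbb{R}}(z-x)^{-n^{*}}F(dx)=\frac{(-1)^{n^{*}-1}}{(n^{*}-1)!}\frac{d^{\,n^{*}-1}}{dz^{\,n^{*}-1}}S(F,z)$, the left member of (2.10) is exactly $(-1)^{n^{*}-1}$ times the left member of the theorem.

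For the right-hand side I would insert $g_{z}$ into the kernel (2.8); everything hinges on the pointwise algebraic identity
$$k(g_{z}\mid x_{1},\dots,x_{n})=\sum_{j=1}^{n}\frac{(-1)^{m_{j}-1}}{(m_{j}-1)!}\frac{d^{\,m_{j}-1}}{dx_{j}^{\,m_{j}-1}}\frac{1}{(z-x_{j})\prod_{i\neq j}(x_{i}-x_{j})^{m_{i}}}=(-1)^{n^{*}-1}\prod_{i=1}^{n}(z-x_{i})^{-m_{i}}.$$
Granting this, Fubini together with the independence of $X_{1},\dots,X_{n}$ gives $\int_{\mathbb{R}^{n}}k(g_{z}\mid x_{1},\dots,x_{n})\prod_{i=1}^{n}F_{X_{i}}(dx_{i})=(-1)^{n^{*}-1}\prod_{i=1}^{n}\int_{\mathbb{R}}(z-x_{i})^{-m_{i}}F_{X_{i}}(dx_{i})$, and since $\int_{\mathbb{R}}(z-x_{i})^{-m_{i}}F_{X_{i}}(dx_{i})=\frac{(-1)^{m_{i}-1}}{(m_{i}-1)!}\frac{d^{\,m_{i}-1}}{dz^{\,m_{i}-1}}S(F_{X_{i}},z)$, the right member of (2.10) is $(-1)^{n^{*}-1}$ times the right member of the theorem.

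The main obstacle is the displayed algebraic identity, which I would prove by residues. Set $G(w)=\big[(z-w)\prod_{i=1}^{n}(w-x_{i})^{m_{i}}\big]^{-1}$; its only finite poles are the simple pole $w=z$, with $\mathrm{Res}_{w=z}G=-\prod_{i}(z-x_{i})^{-m_{i}}$, and the order-$m_{j}$ poles $w=x_{j}$. Since $(w-x_{j})^{m_{j}}G(w)=\big[(z-w)\prod_{i\neq j}(w-x_{i})^{m_{i}}\big]^{-1}$, the standard formula $\mathrm{Res}_{w=x_{j}}G=\frac{1}{(m_{j}-1)!}\frac{d^{\,m_{j}-1}}{dw^{\,m_{j}-1}}\big[(w-x_{j})^{m_{j}}G(w)\big]\big|_{w=x_{j}}$, together with $\prod_{i\neq j}(x_{i}-x_{j})^{m_{i}}=(-1)^{n^{*}-m_{j}}\prod_{i\neq j}(x_{j}-x_{i})^{m_{i}}$, identifies the $j$-th summand of the kernel with $(-1)^{n^{*}-1}\mathrm{Res}_{w=x_{j}}G$. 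Because the denominator of $G$ has degree $n^{*}+1\geq 2$, we have $G(w)=O(w^{-2})$ as $w\to\infty$, so the sum of all residues of $G$ vanishes; hence $\sum_{j}\mathrm{Res}_{w=x_{j}}G=-\mathrm{Res}_{w=z}G=\prod_{i}(z-x_{i})^{-m_{i}}$, which is precisely the identity. Equating the two members of (2.10) and cancelling the common factor $(-1)^{n^{*}-1}$ then yields the theorem for non-real $z$, where all transforms are analytic, and analytic continuation extends it to every $z\in\mathbb{C}\cap\bigcap_{i}(\mathrm{supp}\,F_{X_{i}})^{c}$.
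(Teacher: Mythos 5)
Your proposal is correct and follows essentially the same route as the paper: substitute $h_z(x)=\frac{1}{z-x}$ into the identity (2.10) of Lemma~1, evaluate the kernel $k(h_z\mid x_1,\dots,x_n)$ in closed form as $(-1)^{n^*-1}\prod_{i=1}^n(z-x_i)^{-m_i}$ (the paper's $-\prod_j(-1)^{m_j}(z-x_j)^{-m_j}$ is the same expression), and re-express both sides as derivatives of Stieltjes transforms. The only difference is that the paper asserts the key algebraic kernel identity without justification, whereas you supply a clean residue-calculus proof of it (sum of residues of $G(w)=\bigl[(z-w)\prod_i(w-x_i)^{m_i}\bigr]^{-1}$ vanishing since $G(w)=O(w^{-2})$), which is a welcome filling of that gap rather than a departure from the paper's argument.
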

\begin{proof}
 Let $h_z(x)=\frac{1}{z-x}.\;$ From (2.12) we have
$$S(F^{(n^*-1)},z)=\int_{\mathbb{R}^{n}} k(h_z|x_1, ..., x_n)\prod_{i=1}^n
F_{X_i}(dx_i).$$
 On the other hand
 $$k(h_z|x_1, ..., x_n)=\sum_{j=1}^n
 \frac{(-1)^{m_j-1}}{(m_j-1)!}\;\frac{d^{m_j-1}}{dx_j^{m_j-1}}\frac{1}{(z-x_j)\prod_{i \neq j}^n(x_i-x_j)^{m_i}},$$
which is equal to
$$-\prod_{j=1}^n \frac{(-1)^{m_j}}{(z-x_j)^{m_j}}\;.$$
Therefore,
$$S(F^{(n^*-1)},z)=-\prod_{j=1}^n (-1)^{m_j}
\int_{\mathbb{R}} \frac{1}{(z-x_j)^{m_j}}F_{X_j}(dx_j).$$
But since
$$\frac{d^{m_j-1}}{dz^{m_j-1}}\;\frac{1}{z-x}={(-1)^{m_j-1}}{(m_j-1)!}\;\frac{1}{(z-x)^{m_j}}\;,$$
we obtain that
$$S(F^{(n^*-1)},z)=(-1)^{n^*-1}\prod_{j=1}^n\frac{(-1)^{m_j-1}}{(m_j-1)!}
\frac{d^{m_j-1}}{dz^{m_j-1}}S(F_{X_j},z). $$
 Also we note
that
\begin{eqnarray}
S(F^{(n^*-1)},z)&=&\int \frac{1}{z-x}F^{(n^*-1)}(dx) \nonumber\\
&=&\frac{(-1)^{n^*-1}}{(n^*-1)!}\int (n^*-1)!\;\frac{1}{(z-x)^{n^*}}F(dx)\nonumber\\
&=&\frac{1}{(n^*-1)!}\frac{d^{n^*-1}}{dz^{n^*-1}}\int \frac{1}{z-x}F(dx)\nonumber\\
&=& \frac{1}{(n^*-1)!}\frac{d^{n^*-1}}{dz^{n^*-1}}S(F,z), \nonumber
\end{eqnarray}
giving the result. The proof of the theorem is now complete.\hfill$\Box$\end{proof}

\section{Some properties}
In this section some examples for applications of Theorem~1 are presented, and then some properties are introduced which follow from the properties of $X$'s.
\subsection{Characterization}
Let us now apply Homei's results in (1.2).

\begin{theorem}
Let $X_1$, $X_2$ and $X_3$ be i.i.d random
variables on $[-1,1].$ Then

\noindent (i)  For $m_1$=$m_2$=$m_3$=$1$ we have

\quad (a) $S_3$ has semicircle distribution on $[-1,1]$ if and only if
$X_1$, $X_2$ and $X_3$ have arcsin distribution.

\quad (b) $X_1$, $X_2$ and $X_3$ have semicircle distribution on
$[-1,1]$ if and only if $S_3$ has power semicircle distribution on
$[-1,1]$, i.e.,
$$f(z)=\frac{16}{5\pi}(1-z^2)^{\frac{5}{2}}\;,\;\;\;\;-1\leq z\leq
1.$$

\quad (c) $X_3$ has arcsin distribution if and only if  $S_3$ has power
semicircle distribution on $[-1,1]$, i.e.,
$$f(z)=\frac{8}{3\pi}(1-z^2)^{\frac{3}{2}}\;,\;\;\;\;-1\leq z\leq
1.$$

\noindent (ii) For $m_1=3, m_2=m_3=1$ we have

\quad (d) if $X_2, X_3$ have arcsin distribution, then $X_3$ has
semicircle distribution on $[-1,1]$ if and only if  $S_3$  has power
semicircle distribution on $[-1,1]$, i.e.,
$$f(z)=\frac{8}{3\pi}(1-z^2)^{\frac{3}{2}}\;,\;\;\;\;-1\leq z \leq
1.$$

\noindent (iii) For $m_1=1, m_2=1, m_3=2$ we have

\quad (e) $X_1$, $X_2$ and $X_3$
 have arcsin distribution on $[-1,1]$ if and only if $S_3$ has a
 semicircle distribution on $[-1,1]$.
 \end{theorem}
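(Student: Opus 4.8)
The plan is to obtain every item by specializing Theorem~1 to $n=3$ with the stated multiplicities and substituting the Stieltjes transforms of the candidate laws, assigning to the variables that are not being characterized the laws indicated in each part. On $[-1,1]$ the arcsin law has transform $S(F_{\mathrm{arc}},z)=(z^2-1)^{-1/2}$ and the semicircle law has transform $S(F_{\mathrm{sc}},z)=2(z-\sqrt{z^2-1})$, the branches being fixed by $S(\cdot,z)\sim z^{-1}$ as $z\to\infty$. Since the Stieltjes transform determines the law, it suffices in each case to check that the two sides of the identity in Theorem~1 agree once the integration constants are pinned down by this decay at infinity; the resulting equality of transforms then yields both implications of the stated equivalence.

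Write $R=\sqrt{z^2-1}$. The single computation behind part (i) is $\frac12\frac{d^2}{dz^2}S(F_{\mathrm{sc}},z)=R^{-3}$, obtained by differentiating $2(z-R)$ twice. With $m_1=m_2=m_3=1$ one has $n^*=3$ and Theorem~1 reads $\frac12 S''(F,z)=\prod_{i=1}^3 S(F_{X_i},z)$. In (a) three arcsin factors give $\frac12 S''(F,z)=R^{-3}=\frac12 S''(F_{\mathrm{sc}},z)$, so $F$ is the semicircle law; conversely, as $X_1,X_2,X_3$ share a common transform $s$, the semicircle hypothesis forces $s(z)^3=R^{-3}$, hence $s(z)=R^{-1}$ and the $X_i$ are arcsin. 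In (b) three semicircle factors give $\frac12 S''(F,z)=8(z-R)^3$, while in (c) two semicircle factors and one arcsin factor give $\frac12 S''(F,z)=4(z-R)^2R^{-1}$; integrating these identities twice (with constants fixed at infinity) and applying the Stieltjes inversion formula produces the densities $\frac{16}{5\pi}(1-z^2)^{5/2}$ and $\frac{8}{3\pi}(1-z^2)^{3/2}$ respectively.

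Parts (ii) and (iii) run the same way at higher order. For (d), the multiplicities $m_1=3$, $m_2=m_3=1$ give $n^*=5$ and $\frac{1}{24}S^{(4)}(F,z)=(\frac12 S''(F_{X_1},z))\,S(F_{X_2},z)\,S(F_{X_3},z)$; taking the variable of multiplicity three to be semicircle and the other two arcsin collapses the right-hand side to $R^{-3}\cdot R^{-2}=R^{-5}$, which is matched by the fourth derivative of the transform of $\frac{8}{3\pi}(1-z^2)^{3/2}$ (this is consistent with (c), since $\frac{d^2}{dz^2}[(z-R)^2R^{-1}]=3R^{-5}$). For (e), $m_3=2$ contributes one derivative, so with $n^*=4$ the signs cancel and Theorem~1 becomes $\frac16 S'''(F,z)=s(z)^2 s'(z)=\frac13\frac{d}{dz}s(z)^3$, with $s$ the common transform of $X_1,X_2,X_3$. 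Since $\frac16 S'''(F_{\mathrm{sc}},z)=-zR^{-5}=\frac13\frac{d}{dz}R^{-3}$, one integration (the constant vanishing at infinity) shows that $F$ is semicircle exactly when $s(z)^3=R^{-3}$, i.e. when the $X_i$ are arcsin, settling both directions.

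The repeated differentiations of $2(z-R)$ are elementary; the genuinely laborious point, and the step I expect to be the main obstacle, is parts (b), (c) and (d), where the right-hand side is not a pure power of $z^2-1$ but retains factors of $z-R$. There one must either integrate the differential identity for $S(F,z)$ the requisite number of times and invert, or independently compute the transform of the power-semicircle law $c_\nu(1-z^2)^{\nu}$ and verify that its derivatives reproduce the right-hand side; either route is a bookkeeping exercise in differentiating $z-R$ together with the normalization $\int_{-1}^{1}(1-x^2)^{\nu}\,dx=B(\frac12,\nu+1)$, which fixes the constants $\frac{16}{5\pi}$ and $\frac{8}{3\pi}$.
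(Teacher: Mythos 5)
Your proposal is correct and takes essentially the same route as the paper: specialize Theorem~1 to the stated multiplicities, substitute the transforms $S(F_{\mathrm{arc}},z)=(z^2-1)^{-1/2}$ and $S(F_{\mathrm{sc}},z)=2(z-\sqrt{z^2-1})$, and conclude via uniqueness of the Stieltjes transform with constants fixed by the decay $S(\cdot,z)\sim z^{-1}$ at infinity. If anything your write-up is more careful than the paper's, which merely lists the resulting identities for each part (with some typographical slips, e.g.\ the exponent in part~(a)) and leaves the branch selections, integration constants, and power-semicircle verifications implicit; your consistency check $\frac{d^2}{dz^2}\bigl[(z-\sqrt{z^2-1})^2(z^2-1)^{-1/2}\bigr]=3(z^2-1)^{-5/2}$ is accurate.
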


\begin{proof}
{\it (a)} For the ``only if"  part we assume that $S_3$
has
semicircle distribution on $[-1,1]$; then
$\frac{1}{2}{\cal S}^{''}(F_{Z},z) = (z^{2}-1)^{-\frac{3}{2}}.$
For the ``if" part assuming that $X_1$, $X_2$ and $X_3$ have arcsin
distribution it follows from Theorem~1 that
${\cal S}(F_{X},z) =(z^{2}-1)^{-\frac{1}{3}}.$

{\it (b)} By Theorem~1, we have
$$\frac{1}{2}{\cal S}^{''}(F_{Z},z) = 8(z-\sqrt{z^2-1})^3 \ \
\textrm{(for the ``if" part), and}$$

$${\cal S}(F_{X},z)  = (2(z-\sqrt{z^2-1}))^{\frac{1}{3}} \  \
\textrm{(for the ``only if" part)}.$$

{\it (c)} By Theorem~1, we have
$$\frac{1}{2}{\cal S}^{''}(F_{Z},z)  =  4( z-\sqrt{z^2-1})^2  {\cal
S}(F_{X_{3}},z)\; \ \textrm{(for the ``if" part), and}$$

$${\cal S}(F_{X_3},z)=\frac{1}{\sqrt{z^2-1}}, \;\ \textrm{(for the
``only if" part)}.$$

{\it (d)} By Theorem~1, we have for the `` if" part:
$$\frac{2}{(z^2-1)^{\frac{5}{2}}} = {\cal
S}^{''}(F_{X_{1}},z)\;\frac{1}{\sqrt{z^2-1}}\;\frac{1}{\sqrt{z^2-1}}
\ \textrm{; \ and for the ``only if" part:}$$
$${\cal S}(F_{X},z)=(\frac{z^{2}}{(z^{2}-1)^{
\frac{3}{2}}}-\frac{1}{\sqrt{z^2-1}})(\frac{1}{\sqrt{z^2-1}})(\frac{1}{\sqrt{z^2-1}}).$$

{\it (e)} By Theorem~1, we have (for the ``if" part)
$$\frac{1}{6}{\cal S}^{'''}(F,z)={\cal S}(F_{X_{1}},z) {\cal
S}(F_{X_{2}},z) {\cal S}^{'}(F_{X_{3}},z) =
  \frac{-z}{(z^2-1)^{\frac{5}{2}}},\; \ \textrm{and}
$$

$${\cal S}(F_{X_i},z)= (\frac{1}{\sqrt{z^2-1}})^{2} (\frac{-z}{(z^2-1)^{\frac{3}{2}}})                                \;\;  \;\;\; i=1,2,3 \; \ \textrm{(for the
``only if" part)}.$$
\hfill$\Box$
\end{proof}

The work of Homei (2012) has been summarized in Theorem~3 and Remark~1.

\begin{theorem}
 If $X_1$ and $X_2$ are
independent random variables with a common distribution $F_X$, then
the characterizations of ${S_{n^*}(k_{1})}$ for
$(m_{1}=1, m_{2}=1)$ and $(m_{1}=1, m_{2}=2)$ are identical.
\end{theorem}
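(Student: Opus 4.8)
The plan is to evaluate the identity of Theorem~1 for each of the two configurations and to show that, after a single integration, they impose the \emph{same} relation between the common marginal transform $S(F_X,z)$ and the transform of the mixture. Since the characterization of $S_{n^*}(k_1)$ is nothing but the list of admissible pairs $\big(F_X,F\big)$ satisfying that relation, the two characterizations must then coincide.

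First I would set $n=2$, so that $n^*=m_1+m_2$, and write $G$ and $H$ for the distributions of the mixture in the cases $(m_1,m_2)=(1,1)$ and $(m_1,m_2)=(1,2)$, respectively. Applying Theorem~1 with $F_{X_1}=F_{X_2}=F_X$ gives, in the first case $(n^*=2)$,
$$-\frac{d}{dz}S(G,z)=S(F_X,z)^2, \eqno(\star)$$
and, in the second case $(n^*=3)$,
$$\frac12\,\frac{d^2}{dz^2}S(H,z)=-\,S(F_X,z)\,\frac{d}{dz}S(F_X,z). \eqno(\star\star)$$

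The key step is to notice that the right-hand side of $(\star\star)$ equals $-\tfrac12\frac{d}{dz}\big[S(F_X,z)^2\big]$, so $(\star\star)$ is precisely the $z$-derivative of $(\star)$ with $G$ replaced by $H$. Integrating $(\star\star)$ once yields
$$-\frac{d}{dz}S(H,z)=S(F_X,z)^2+C$$
on the resolvent set $\mathbb{C}\cap(\mathrm{supp}\,F_X)^c$, which for a law on $[-1,1]$ is the connected domain $\mathbb{C}\setminus[-1,1]$. Here I would invoke the elementary fact that $S(\cdot,z)$ and its derivatives tend to $0$ as $|z|\to\infty$; letting $|z|\to\infty$ forces $C=0$, so $(\star\star)$ collapses to $(\star)$ with $G$ replaced by $H$.

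Consequently both configurations are governed by the single identity $\frac{d}{dz}S(F,z)=-S(F_X,z)^2$ linking the mixture law $F$ to the common marginal $F_X$. Hence a pair $(F_X,F)$ is admissible for $(m_1,m_2)=(1,1)$ if and only if it is admissible for $(m_1,m_2)=(1,2)$, so the two characterizations are identical; in particular $G=H$ whenever the marginals agree, since their transforms have equal derivatives and both vanish at infinity. The one delicate point is the vanishing of the integration constant $C$: it rests on the decay of Stieltjes transforms at infinity together with the connectedness of the resolvent set, so for a marginal with disconnected support one would have to argue $C=0$ separately on each connected component.
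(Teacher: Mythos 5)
Your proof is correct and takes essentially the same approach as the paper: it applies Theorem~1 to the $(m_1,m_2)=(1,2)$ case, rewrites the right-hand side as $-\frac{1}{2}\frac{d}{dz}\left[S(F_X,z)^2\right]$, integrates once, and uses the vanishing of the Stieltjes transform at infinity to eliminate the integration constant, thereby recovering the $(1,1)$ equation $-S'(F,z)=S(F_X,z)^2$. The only difference is cosmetic: the paper identifies the resulting equation (3.1) with the one obtained by Van Assche (1987) rather than re-deriving the $(1,1)$ case directly from Theorem~1 as you do.
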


\begin{proof}
We note that $X_1$ and $X_2$ have a common
distribution function $F_X$. By using Theorem~1 for $(m_{1}=1, m_{2}=2)$, we
have
$$-\frac{1}{2}{\cal S}^{''}(F_{{S_{n^*}(k_{1})}},z)={\cal S}(F_X,z){\cal
S}^{'}(F_X,z),$$ and so $$-{\cal
S}^{''}(F_{{S_{n^*}(k_{1})}},z)=\frac{d}{dz}{\cal
S}^{2}(F_X,z),$$ and $$-{\cal
S}^{'}(F_{{S_{n^*}(k_{1})}},z)={\cal S}^{2}(F_X,z).
\eqno(3.1)$$
We note that the Stieltjes transform tends to zero when $z$ is
 sufficiently large. In that case the constant in the
differential equation will be zero. The equation (3.1) is exactly
the equation obtained by Van Assche (1987) when $X_1$ and $X_2$ have
a common distribution; so his
results hold in our framework as well. \hfill$\Box$
\end{proof}

\begin{remark}
Let ${S_{n^*}}$ be the randomly weighted average
given in (1.2). Assume random variables $X_1$, $X_2$ are independent
and continuous, $X_i \sim F_{X_i},i=1, 2$. Then
$$B(n_1, n_2){\cal S}^{(n_{1}+n_{2}-1)} (F_Z, z)=-{\cal S}^{(n_{1}-1)} (F_{X_{1}}, z) {\cal S}^{(n_{2}-1)} (F_{X_{2}}, z)$$ holds for any $z\in \mathbb{C}\bigcap_{i=1}^2
(\mbox{supp}F_{X_i})^c.$\hfill$\triangle$
\end{remark}

\subsection{Limit properties}
 Let $X_{1}, X_{2},\cdots $ be a
sequence of independent identically random variables and $\{R_{i}\}$
be a sequence as (1.1). It is natural to ask weather
$S_n(R_{1},...,R_{n-1})$ converges in probability or not. The
present subsection is devoted to those sorts of questions.

\begin{lemma}
Let $\{X_{k}\}$ be a sequence of independent, identically
distributed random variables with $E(\mid X_{k} \mid)< \infty $ and
$E(
X_{k})=\mu$. Let $a_{nk}$ satisfy these conditions:\\
(i) $\lim_{n\longrightarrow\infty}a_{nk}=0$ for every $k$,\\
(ii) $\lim_{n\longrightarrow\infty}\sum
 _{k=1}^{\infty}a_{nk}=1$, and\\
(iii) $\sum_{k=1}^{\infty}a_{nk}\leq M$ for all $n$.\\
If $\max\mid a_{nk}\mid\longrightarrow0$ as $n\longrightarrow\infty$,
then $\sum_{k}a_{nk}X_{k}\longrightarrow \mu$ in probability.
\end{lemma}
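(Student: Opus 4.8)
The plan is to run the classical truncation argument behind Khintchine's weak law of large numbers, adapted to the triangular array of weights $\{a_{nk}\}$. First I would dispose of the centering. Since $E(\sum_k a_{nk}X_k)=\mu\sum_k a_{nk}$ and $\sum_k a_{nk}\longrightarrow 1$ by (ii), the mean already converges to $\mu$; writing $W_k=X_k-\mu$ (i.i.d., $E(W_k)=0$, $E|W_k|<\infty$) it therefore suffices to prove that $T_n:=\sum_k a_{nk}W_k\longrightarrow 0$ in probability, because $\sum_k a_{nk}X_k-\mu=T_n+\mu(\sum_k a_{nk}-1)$ and the last term vanishes.

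Next I would fix $\epsilon>0$ and truncate each $W_k$ at a level $A$ (to be chosen large but independent of $n$): set $W_k'=W_k\mathbf{1}_{\{|W_k|\le A\}}$ and $W_k''=W_k-W_k'$, so that $T_n=\sum_k a_{nk}W_k'+\sum_k a_{nk}W_k''=:T_n'+T_n''$. For the tail part I would use the $L^1$ bound $E|T_n''|\le\sum_k|a_{nk}|\,E|W_k''|\le M\,E(|W_1|\mathbf{1}_{\{|W_1|>A\}})$, which uses condition (iii) (read as a bound on $\sum_k|a_{nk}|$, automatic when the weights are nonnegative as in (1.1)). Since $E|W_1|<\infty$, the dominated-convergence tail $E(|W_1|\mathbf{1}_{\{|W_1|>A\}})\to 0$ as $A\to\infty$, so $A$ can be fixed once and for all to make $E|T_n''|$ arbitrarily small uniformly in $n$, and Markov's inequality then bounds $P(|T_n''|>\epsilon/2)$.

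For the bounded part I would center, writing $T_n'=\sum_k a_{nk}(W_k'-E W_k')+E(W_1')\sum_k a_{nk}$. The deterministic remainder is controlled by $|E(W_1')|=|E(W_1'')|\le E(|W_1|\mathbf{1}_{\{|W_1|>A\}})$, again small by the choice of $A$, so it can be absorbed into the $\epsilon$-budget. The centered random part $V_n=\sum_k a_{nk}(W_k'-E W_k')$ has mean zero and, by independence, variance $\mathrm{Var}(V_n)=\sum_k a_{nk}^2\,\mathrm{Var}(W_k')\le E(W_1'^2)\sum_k a_{nk}^2$. Here the two weight hypotheses combine decisively: $\sum_k a_{nk}^2\le(\max_k|a_{nk}|)\sum_k|a_{nk}|\le M\max_k|a_{nk}|\longrightarrow 0$, while $E(W_1'^2)\le A\,E|W_1|$ stays bounded for fixed $A$. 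Hence $\mathrm{Var}(V_n)\to 0$ and Chebyshev's inequality gives $P(|V_n|>\epsilon/4)\to 0$. Assembling the estimates through $P(|T_n|>\epsilon)\le P(|V_n|>\epsilon/4)+P(|T_n''|>\epsilon/2)$ yields $T_n\to 0$ in probability.

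I expect the main obstacle to be precisely the gap left open by having only $E|X_k|<\infty$ rather than a finite second moment: without a variance one cannot apply Chebyshev directly to $T_n$, which is what forces the truncation split and the bookkeeping of the truncation bias $E(W_1')$. The decisive use of the hypotheses is the variance estimate $\sum_k a_{nk}^2\le M\max_k|a_{nk}|$, where condition (iii) and the assumption $\max_k|a_{nk}|\to 0$ together kill the fluctuation term; dropping either one breaks the argument. As an alternative I could argue through characteristic functions, using $\phi_{T_n}(t)=\prod_k\phi_W(a_{nk}t)$ with $\phi_W(s)=1+o(s)$ as $s\to 0$ (valid since $E(W_1)=0$ and $E|W_1|<\infty$) and the same two weight conditions to show $\sum_k[\phi_W(a_{nk}t)-1]\to 0$, hence $\phi_{T_n}(t)\to 1$; convergence in distribution to the constant $0$ is equivalent to convergence in probability.
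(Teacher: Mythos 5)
Your proof is correct, but it is worth knowing that the paper does not actually prove this lemma: its entire proof reads ``See Pruitt (1966)'', so your truncation argument differs from the paper in kind --- a self-contained derivation versus an appeal to a known summability theorem. Your route is the classical weak-law truncation scheme: split $W_k=X_k-\mu$ at a fixed level $A$, control the tail in $L^1$ via $\sum_k|a_{nk}|\le M$, kill the fluctuation of the bounded part by Chebyshev through the key estimate $\sum_k a_{nk}^2\le\bigl(\max_k|a_{nk}|\bigr)\sum_k|a_{nk}|\le M\max_k|a_{nk}|\to 0$, and track the truncation bias $E(W_1')$; this is essentially the mechanism underlying Pruitt's theorem, and it proves everything Lemma 2 asserts. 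What the citation buys instead is generality: Pruitt's result also gives necessity of $\max_k|a_{nk}|\to 0$ (for convergence in probability to hold for every i.i.d.\ sequence with finite mean) and related almost-sure statements, none of which the paper needs. Three small points you should make explicit in a written version: first, condition (iii) must indeed be read as $\sum_k|a_{nk}|\le M$ --- exactly Pruitt's hypothesis, and automatic here since the weights $R_i$ are nonnegative spacings --- which you correctly flag; second, $\sum_k|a_{nk}|\le M$ together with $E|W_1|<\infty$ makes the infinite series absolutely convergent a.s., so $T_n$ is well defined before any estimates begin; third, the final assembly is properly a $\limsup$ statement, since for fixed $A$ the tail bound $P(|T_n''|>\epsilon/2)\le 2M\delta_A/\epsilon$ with $\delta_A=E\bigl(|W_1|\mathbf{1}_{\{|W_1|>A\}}\bigr)$ is small but does not tend to $0$ in $n$; one concludes $\limsup_n P(|T_n|>\epsilon)\le 2M\delta_A/\epsilon$ and then lets $A\to\infty$. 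Your characteristic-function alternative is equally sound (the uniform $o(|s|)$ estimate for $\phi_W(s)-1$ plus $\sum_k|a_{nk}|\le M$ gives $\sum_k|\phi_W(a_{nk}t)-1|\le\eta|t|M$ eventually), and either version would serve as an honest replacement for the paper's bare citation.
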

\begin{proof}
See Pruitt (1966).\hfill$\Box$
\end{proof}

\begin{lemma}
Assume $\{R_i\}$ is given by (1.1). Then max ${R_i
\longrightarrow 0}$, w.p.1, as $n\longrightarrow \infty$.
\end{lemma}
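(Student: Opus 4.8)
The plan is to reduce the statement to a classical tail estimate for the largest spacing generated by uniform order statistics. Recall from (1.1) that the weights $R_1,\dots,R_n$ are precisely the $n$ consecutive spacings cut out of $[0,1]$ by the $n-1$ order statistics $U_{(1)},\dots,U_{(n-1)}$ of a uniform sample, together with the endpoints $0$ and $1$. The joint law of $(R_1,\dots,R_n)$ is therefore the uniform distribution on the probability simplex; in particular the spacings are exchangeable, so each individual $R_i$ is $\mathrm{Beta}(1,n-1)$ distributed and $P(R_i>x)=(1-x)^{n-1}$ for every $i$ and every $x\in[0,1]$.

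First I would bound the tail of $M_n:=\max_{1\le i\le n}R_i$ by a union bound over the $n$ spacings, which gives $P(M_n>\epsilon)\le n(1-\epsilon)^{n-1}$ for each fixed $\epsilon\in(0,1)$. Since $1-\epsilon<1$, the right-hand side decays geometrically (up to the polynomial factor $n$) and is therefore summable, so $\sum_{n}P(M_n>\epsilon)<\infty$.

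Next I would invoke the first Borel--Cantelli lemma: for each fixed $\epsilon$ the event $\{M_n>\epsilon\}$ occurs only finitely often with probability one, i.e. $\limsup_n M_n\le\epsilon$ almost surely. To pass from "each fixed $\epsilon$'' to the full conclusion, I would take $\epsilon=1/k$ for $k=1,2,\dots$; since a countable union of null sets is null, outside a single null set we have $\limsup_n M_n\le 1/k$ for every $k$, whence $M_n\to0$ with probability one. As $M_n=\max_i R_i\ge0$, this is exactly the asserted statement.

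The computations here are routine; the only points requiring care are the identification of the marginal law of a single spacing as $\mathrm{Beta}(1,n-1)$ and the handling of the exceptional null set when letting $\epsilon\downarrow0$. An equivalent route avoids the Beta marginals altogether through the R\'enyi exponential representation $R_i\stackrel{d}{=}E_i/\sum_{j=1}^n E_j$ with i.i.d.\ $E_j\sim\mathrm{Exp}(1)$: then $M_n=\max_j E_j/\sum_j E_j$, and combining the strong law $\frac1n\sum_j E_j\to1$ a.s.\ with the elementary bound $\max_{j\le n}E_j=O(\log n)$ a.s.\ (itself a Borel--Cantelli consequence of $P(\max_{j\le n}E_j>3\log n)\le n^{-2}$) again yields $M_n=O(\log n/n)\to0$ almost surely. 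I would present the direct tail-bound argument as the main proof, as it is the shortest.
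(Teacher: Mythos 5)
Your proof is correct, but it is genuinely different from the paper's: the paper does not argue at all, it simply cites Devroye (1981), where laws of the iterated logarithm for maximal uniform spacings are established, and reads off that the maximal spacing tends to zero almost surely. Your route is self-contained and elementary: the identification of $(R_1,\dots,R_n)$ as uniform on the simplex gives the exact marginal $P(R_i>\epsilon)=(1-\epsilon)^{n-1}$, the union bound yields $P(\max_i R_i>\epsilon)\le n(1-\epsilon)^{n-1}$, which is summable in $n$, and Borel--Cantelli together with the countable choice $\epsilon=1/k$ finishes the argument. A point in favour of your main argument is that it is coupling-free: Borel--Cantelli needs only the per-$n$ marginal tail probabilities, so the conclusion holds however the triangular array of spacings is realized on a common probability space, which is exactly the level of care the statement requires. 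What the paper's citation buys instead is a much sharper result -- Devroye's work pins down the precise almost-sure rate of the maximal spacing, of order $\log n/n$ with exact LIL-type constants -- of which the lemma is a trivial corollary; your aside via the R\'enyi representation $R_i\stackrel{d}{=}E_i/\sum_{j=1}^n E_j$ recovers that $O(\log n/n)$ rate, though note the distributional identity there is per fixed $n$, so as stated that route still leans on a Borel--Cantelli step (as you indeed supply) rather than on the strong law alone. Presenting the direct tail-bound argument as the main proof is the right choice.
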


\begin{proof}
It was shown that the maximum spacings tend to zero
when the sample is taken from uniform distribution on $[0,1]$, see
Devroye (1981). Then obviously  ${R_i \longrightarrow 0}$, w.p.1, as
$n\longrightarrow \infty. $ \hfill$\Box$
\end{proof}

We are now in a position to state the converges in probability on $S_n(R_{1},...,R_{n-1}).$

\begin{theorem}
Let $\{X_{n}\}$ be a sequence of
independent, identically distribution with \newline$E(\mid
X_{k}\mid)< \infty$, $E(X_{k})=\mu $ and $P(\textrm Max
R_i\longrightarrow 0)=1$. Then:
 $$S_n(R_{1},...,R_{n-1})\rightarrow \mu \quad
 \rm{ in  \ probability}.$$
\end{theorem}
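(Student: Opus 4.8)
The plan is to realize the final statement as a direct application of the weighted‑average law of large numbers in Lemma~3, using the random proportions $R_i$ as the triangular array of weights $a_{nk}$. First I would fix the identification $a_{nk} = R_k$ for $1 \le k \le n$ and $a_{nk}=0$ for $k>n$, so that $\sum_k a_{nk}X_k = S_n(R_1,\dots,R_{n-1})$ exactly. The immediate difficulty is that Lemma~3 is a statement about \emph{deterministic} weights satisfying conditions (i)--(iii), whereas here the $a_{nk}=R_k$ are themselves random. So the real content of the proof is to verify that, with probability one, the sample path of the weights $\{R_k\}$ falls into the class of arrays to which Lemma~3 applies, and then to invoke Lemma~3 conditionally on that path.

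\begin{proof}
We verify conditions (i)--(iii) of Lemma~3 for the random array $a_{nk}=R_k$ (with $a_{nk}=0$ for $k>n$), working on the almost‑sure event $E=\{\max_i R_i \to 0\}$, which has probability one by hypothesis (equivalently by Lemma~4). On $E$, condition (i) holds because $0\le R_k \le \max_i R_i \to 0$ for each fixed $k$. Condition (ii) is immediate, since $\sum_{k=1}^{\infty} a_{nk}=\sum_{k=1}^{n} R_k = 1$ for every $n$ by the definition of the proportions in $(1.1)$; for the same reason condition (iii) holds with $M=1$, and the normalization condition $\max_k|a_{nk}|=\max_i R_i \to 0$ is exactly the event $E$. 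Thus every path in $E$ yields a weight array meeting all the hypotheses of Lemma~3.

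It remains to pass from the conditional conclusion to the unconditional one. Conditionally on the realized proportions $\{R_k\}$ lying in $E$, the weights are deterministic and independent of the $X_k$ (the $R_i$ are built from an independent uniform sample), so Lemma~3 applies verbatim and gives $\sum_k a_{nk}X_k \to \mu$ in probability along that path. Integrating this conditional convergence over the conditioning variables, and using that $E$ has probability one together with the independence of $\{R_i\}$ and $\{X_k\}$, we conclude that $S_n(R_1,\dots,R_{n-1}) \to \mu$ in probability. I expect the only genuine obstacle to be the careful handling of this conditioning step, i.e.\ making rigorous the interchange of the almost‑sure convergence in the weights with the in‑probability convergence over the $X_k$; the verification of conditions (i)--(iii) itself is routine once the event $E$ is isolated.
\end{proof}
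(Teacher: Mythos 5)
Your proposal is correct and is essentially the paper's own argument: the paper simply states that the theorem ``is an easy consequence of the above lemmas,'' meaning exactly what you spell out --- apply Pruitt's result (Lemma~3) with $a_{nk}=R_k$, note that $\sum_k R_k = 1$ gives conditions (ii)--(iii) with $M=1$, and use the hypothesis $P(\max R_i \to 0)=1$ (Lemma~4) to get (i) and the normalization pathwise. Your added care about conditioning on the weight array (independent of the $X_k$) and integrating the conditional convergence is a detail the paper leaves implicit, not a departure from its route.
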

\begin{proof}
The proof of theorem is an easy consequence of the
above lemmas. \hfill$\Box$
\end{proof}

\subsection{Equality}

One may guess that $S_n(R_{1},...,R_{n-1})$ have
minimum variance (as is the case for $\overline{x}$) but following theorem shows that this is not true.

\begin{theorem}
 $S_n(R_{1},...,R_{n-1})$ does not have minimum variance in class $\mathcal{C}$.
$$\mathcal{C}=\left\{S_n(W_{1},...,W_{n-1}):\mathbf{W_i}=V_{(i)}-V_{(i-1)},V_{n}=1, V_{(0)}=0\right\},$$
where $V_{i}$ have a  power distribution  with parameter $\theta$.
\end{theorem}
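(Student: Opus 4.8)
The theorem asserts that within the class $\mathcal{C}$ of randomly weighted averages $S_n(W_1,\dots,W_{n-1})$, where the weights $W_i=V_{(i)}-V_{(i-1)}$ are spacings of order statistics from a power distribution with parameter $\theta$, the particular choice coming from the uniform case (1.1) does NOT minimize variance. The natural strategy is to compute $\mathrm{Var}\bigl(S_n(W_1,\dots,W_{n-1})\bigr)$ as a function of $\theta$ and exhibit a value of $\theta$ (or a range) for which the variance is strictly smaller than at the value corresponding to the original uniform weights.

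**The plan.** Let me write $S_n=\sum_{i=1}^n W_i X_i$ with the $X_i$ i.i.d. (mean $\mu$, variance $\sigma^2$) and independent of the weights $\mathbf{W}=(W_1,\dots,W_n)$. Conditioning on the weights, since $E(S_n\mid\mathbf W)=\mu$ and the $X_i$ are independent, the law of total variance gives
$$
\mathrm{Var}(S_n)=\sigma^2\,E\!\Bigl(\sum_{i=1}^n W_i^2\Bigr),
$$
because $\sum_i W_i=1$ forces the conditional mean to be $\mu$ with no between-group contribution. Thus the whole problem reduces to computing $E\bigl(\sum_i W_i^2\bigr)=\sum_i E(W_i^2)$, the expected sum of squared spacings of the power-$(\theta)$ order statistics. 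First I would recall that the power distribution with parameter $\theta$ on $[0,1]$ has CDF $v^\theta$, so $V=U^{1/\theta}$ for uniform $U$; its order statistics are the $1/\theta$ powers of the uniform order statistics. I would then express each $E(W_i^2)=E\bigl[(V_{(i)}-V_{(i-1)})^2\bigr]$ using the known joint density of two order statistics, reducing it to Beta integrals.

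**Carrying it out.** The moments $E(V_{(i)})$, $E(V_{(i)}^2)$ and the cross term $E(V_{(i)}V_{(i-1)})$ are all expressible through Beta functions via the representation $V_{(i)}=U_{(i)}^{1/\theta}$ and the fact that $U_{(i)}\sim\mathrm{Beta}(i,n-i+1)$, with the joint moment of $(U_{(i-1)},U_{(i)})$ given by a Dirichlet-type integral. Assembling these gives $\Phi(\theta):=\sum_i E(W_i^2)$ as an explicit function of $\theta$ (and $n$). The key qualitative point is that $\Phi(1)$ reproduces the uniform-spacing value $\tfrac{2}{n+1}$ from (1.1), and I would show $\Phi'(1)\neq 0$ — equivalently, that $\theta=1$ is not a critical point of $\Phi$ — so that moving $\theta$ slightly off $1$ in the appropriate direction strictly decreases $\Phi$, hence strictly decreases $\mathrm{Var}(S_n)$. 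It suffices to compute this derivative, or even just to evaluate $\Phi$ at one concrete $\theta\neq 1$ and check numerically that $\Phi(\theta)<\Phi(1)$, which already disproves minimality.

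**The main obstacle.** The genuine difficulty is the bookkeeping in evaluating $E(V_{(i)}V_{(i-1)})$ for the power distribution: the cross-moment of two adjacent order statistics under the transformation $V=U^{1/\theta}$ produces a ratio of Gamma functions that does not collapse as cleanly as the uniform case, and one must sum over $i=1,\dots,n$. I expect this summation step — obtaining a closed form for $\Phi(\theta)$ and then comparing $\Phi(\theta)$ with $\Phi(1)$ — to be where all the real work lies. A clean shortcut, which I would pursue to keep the proof short, is to fix a small value of $n$ (say $n=2$, matching the Van Assche setting of the paper) and one explicit alternative parameter, compute both variances in closed form, and exhibit the strict inequality directly; since the theorem only asserts non-minimality, a single explicit counterexample in the class $\mathcal{C}$ suffices and avoids the general summation entirely.
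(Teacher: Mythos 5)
Your general-$n$ plan is exactly the paper's proof. The paper likewise reduces the problem by conditioning on the weights to get $V_{\theta}(S_n(W_1,\dots,W_{n-1}))=\bigl(\sum_{i=1}^{n}EW_i^{2}\bigr)\sigma^{2}$, writes this as an explicit (Beta-integral) function of $\theta$, and then — rather than merely checking that $\theta=1$ is not a critical point — computes the derivative in closed form,
$$\frac{\partial V_{\theta}(S_n(W_{1},...,W_{n-1}))}{\partial\theta}\Big|_{\theta=1}
=\frac{2n-2(n+2)\sum_{i=2}^{n+1}\frac{1}{i}}{(n+1)(n+2)^{2}},$$
which is strictly negative for every $n\geq 2$; continuity in $\theta$ then produces some $\theta$ with strictly smaller variance. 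Your weaker criterion $\Phi'(1)\neq 0$ does suffice (since $\theta=1$ is interior to the parameter range, a nonzero derivative in either direction rules out a minimum), and your identification of $\Phi(1)=\tfrac{2}{n+1}$ and of the cross-moment $E(V_{(i)}V_{(i-1)})$ bookkeeping as the real work matches where the paper itself waves at a ``tedious'' computation.

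The genuine gap is in the shortcut you say you would actually pursue. Fixing $n=2$ and one explicit $\theta\neq 1$ proves only that $S_2$ fails to minimize variance in the $n=2$ class; but the theorem is asserted for $S_n$ with $n$ generic, and the class $\mathcal{C}$ itself changes with $n$, so a single small-$n$ counterexample does not discharge the statement — non-minimality must be exhibited for each $n$. This is precisely what the paper's displayed derivative accomplishes uniformly: its numerator $2n-2(n+2)\sum_{i=2}^{n+1}\frac{1}{i}$ is negative for all $n\geq 2$ (already at $n=2$ one has $4<\tfrac{20}{3}$, and the harmonic sum only grows). If you insist on the two-point comparison, you would have to carry it out with $n$ left symbolic — at which point you are back to the general summation you hoped to avoid, i.e., back to the paper's argument.
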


\begin{proof} We have
$V_{\theta}(S_n(W_{1},...,W_{n-1}))=(\sum_{i=1}^{n}EW_{i}^{2})\sigma^2$,
and variance for RWA in class $\mathcal{C}$ is
$$V_{\theta}(S_n(W_{1},...,W_{n-1}))=\sigma^2(\frac{2n\theta}{\theta+2}-\frac{2n\theta}{n\theta+1}+1$$
$$-\sum_{i=1}^{n-1}\sum_{k=0}^{n-i-1}\frac{2n!\theta^{2}(-1)^{n-i-1-k}}{(i\theta+1)(i-1)!k!(n-i-1-k)(n\theta-k\theta+2)}).$$

Differentiating $V_{\theta}(S_n(W_{1},...,W_{n}))$ with respect to
$\theta$ and evaluating at $\theta=1$, requires a thedious work,
which finally results in

\begin{eqnarray*}
 \frac{\partial
V_{\theta}(S_n(W_{1},...,W_{n-1}))}{\partial\theta}\mid _{\theta=1}
&=&\frac{2n-2(n+2)\sum_{i=2}^{n+1}\frac{1}{i}}{(n+1)(n+2)^{2}}.
\end{eqnarray*}

When $n \geq 2$ the last expression is negative, so in $\theta=1$,
$V_{\theta}(S_n(W_{1},...,W_{n-1}))$ is decreasing. On the other
hand, this function on $\theta$ is continues, so there is some
$\theta$ in which
the value of function is less than the value at $\theta=1$.\hfill$\Box$  \end{proof}

As can be seen from the below figure, the function becomes increasing
when the interval $(1,2)$ is omitted from the domain. Thus in that
case the minimum value of the function on the near domain is
achieved at $\theta=1$.

\begin{figure}[h]
\begin{center}
\scalebox{0.55}{\includegraphics{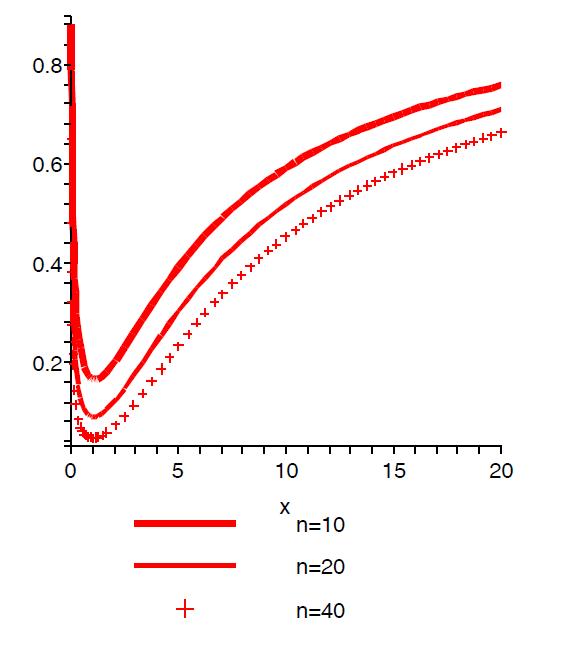}}
\end{center}
\caption{$V_{\theta}((S_n(W_{1},...,W_{n})))$\quad
$\theta\geq 1,\  n=10,\  n=20,\  n=40$.} \label{myfig1}
\end{figure}

\newpage

\end{document}